\title[Arc length of function graphs]{Arc length of function graphs via Taylor's formula}
\newtheorem{theorem}{Theorem}
\newtheorem{lemma}[theorem]{Lemma}
\theoremstyle{definition}
\newtheorem{definition}[theorem]{Definition}
\newtheorem{example}[theorem]{Example}
\newtheorem{remark}[theorem]{Remark}
\newtheorem{problem}[theorem]{Problem}
\begin{document}

\author{Patrik Nystedt}
\address{University West,
Department of Engineering Science, 
SE-461 86 Trollh\"{a}ttan, Sweden}

\email{patrik.nystedt@hv.se}

\begin{abstract}
We use Taylor's formula with Lagrange remainder to prove that functions with bounded second derivative
are rectifiable in the case when polygonal paths are defined by interval sub\-divisions which are equally spaced.
We discuss potential bene\-fits for such an approach in introductory calculus courses.
\end{abstract}

\maketitle




\section{Introduction}

One of the first experiences of measurements that we encounter in our lives
is that of {\it length}. Even young children are involved in many everyday activities
that concern length measurements. Questions such as ''How {\it tall} am I?'' or
''How {\it long} can you jump?'' or
''How {\it far} is it to my friends house?'' arise naturally from them. 
In the early years of schooling we are taught how to measure 
lengths of {\it straight lines} using a ruler and express our findings in appropriate units.
In middle school, we are presented with the problem of measurement of the circumference a the circle
and how to relate this to the length of its diameter.
For many students the transition from understanding straight line
measurements to comprehending length measurement of non-linear curves is not so easily accomplished.
Indeed, it is only natural for them to pose questions such as ''How can we measure something curved
using a straight ruler?'' or ''What do we really mean when we speak of the length of a curve?''.
As teachers, we have to treat these questions seriously, because
when pondering over this, the students are placed in very good company.
Indeed, over the millennia, many of our greatest thinkers failed to provide satisfying answers to such questions.
For instance, the Greek philosopher Aristotle (384-322 BC) stated the following concerning
comparisons of motions along straight lines and along circles:
\begin{quote}
''But, once more, if the motions are comparable, we are met by the difficulty
aforesaid, namely that we shall have a straight line equal to a circle.
But these are not comparable.'' \cite[p. 141]{heath1970}
\end{quote}
With some exceptions (for instance Archimedes rectification of the 
circle using a spiral, see e.g. \cite{richeson2013}), Aristotle's view on these matters
persisted amongst scholars even up to the time of Descartes (1596-1650) who wrote the following in his work
{\it La G\'{e}om\'{e}trie} from 1637:
\begin{quote}
''...the ratios between straight and curved lines are not known,
and I believe cannot be discovered by human minds, and therefore no
conclusion based on such ratios can be accepted as rigorous and exact.''
\cite[p. 91]{smith1954}
\end{quote}
Descartes would only 20 years later be proved wrong on this point by 
Neil who showed how to rectify the semi-cubical parabola $y^3 = a x^2$.
Independently, both van Heuraet and Fermat came to the same conclusion 
within a few years after Neil's discovery \cite{traub1984}.
After that, of course, Newton and Leibniz fully developed the calculus machinery
including formulas for arc length using integrals \cite[p. 217, p. 242]{edwards1979}.

\section{Arc length in calculus teaching}

The first time students are exposed to arc length calculations of general functions is in introductory calculus courses.
In popular calculus books (see e.g. \cite{adams2006,hass2017,stewart2015}) the concept of curve length is typically defined in 
the following way.

\begin{definition}
Let $A$ and $B$ be two points in the plane and let $|AB|$ denote the distance 
between $A$ and $B$.
Let $C$ be a curve in the plane joining $A$ and $B$.
Suppose that we choose points $A = P_0$, $P_1$, $P_2$, $\ldots$, $P_{n-1}$ and $P_n = B$
in order along the curve. The polygonal line $P_0,P_1,P_2,\ldots,P_n$ constructed by joining
adjacent pairs of these points with straight lines forms a polygonal approximation to $C$,
having length $L_n = \sum_{i=1}^n | P_{i-1} P_1 |$.
The curve $C$ is said to be rectifiable if the limit $L$ of $L_n$, 
as $n \to \infty$ and the maximum segment length $|P_{i-1} P_i| \to 0$,
exists. In that case $L$ is called the length of $C$.  
\end{definition}

An obvious pedagogical difficulty for teachers using such a definition 
is that then we are not calculating a limit of a {\it sequence}, 
in the usual sense that the students  are used to, but rather the limit of a {\it net} \cite{olmstead1961}.
Not only is such a definition unsuitable for concrete calculations, for instance 
using computer simulations, but also highly abstract.
Disregarding this difficulty, the typical calculus book (see loc. cit.) will then
state some variant of the following result which is then used
in exercises to calculate lengths of function graphs in particular cases.

\begin{theorem}\label{classicalarclength}
If $f$ is a real-valued function defined on $[a,b]$ with the property that its derivative
exists and is continuous on $[a,b]$, then $f$ is rectifiable on $[a,b]$ and its length 
$L$ equals $\int_a^b \sqrt{1 + f'(x)^2} \ dx$. 
In that case, if $G$ is a primitive function of $\sqrt{1 + (f')^2}$ on $[a,b]$,
then $L = G(b)-G(a)$. 
\end{theorem}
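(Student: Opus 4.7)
The plan is to estimate the polygonal length $L_n$ by applying the Mean Value Theorem on each subinterval, recast it as a Riemann-type sum for $g(x) := \sqrt{1 + f'(x)^2}$, and then pass to the limit using the integrability of $g$. Concretely, given any partition $a = x_0 < x_1 < \cdots < x_n = b$ with $\Delta x_i := x_i - x_{i-1}$, the associated polygonal length is
\[
L_n = \sum_{i=1}^n \sqrt{(\Delta x_i)^2 + (f(x_i) - f(x_{i-1}))^2}.
\]
Since $f$ is differentiable on $[a,b]$, the Mean Value Theorem furnishes points $\xi_i \in (x_{i-1}, x_i)$ with $f(x_i) - f(x_{i-1}) = f'(\xi_i)\,\Delta x_i$, and factoring $\Delta x_i$ out of the square root gives
\[
L_n = \sum_{i=1}^n \sqrt{1 + f'(\xi_i)^2}\, \Delta x_i = \sum_{i=1}^n g(\xi_i)\,\Delta x_i .
\]

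Because $f'$ is continuous on the compact interval $[a,b]$, the function $g$ is continuous, hence Riemann integrable and uniformly continuous. The expression above already looks like a Riemann sum for $\int_a^b g\,dx$, but the tags $\xi_i$ are prescribed by the MVT rather than freely chosen. This is where the main obstacle lies: we must show that this particular tagging still yields the integral as the mesh $\mu := \max_i \Delta x_i$ tends to zero, and, strictly speaking, that convergence takes place in the \emph{net} sense required by the definition.

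To handle this, fix $\epsilon > 0$ and, by uniform continuity of $g$, choose $\delta > 0$ so that $|g(u)-g(v)| < \epsilon/(b-a)$ whenever $|u-v|<\delta$. For any partition of mesh $\mu < \delta$, comparing $L_n$ with the right-endpoint Riemann sum $R_n := \sum_{i=1}^n g(x_i)\,\Delta x_i$ yields
\[
|L_n - R_n| \le \sum_{i=1}^n |g(\xi_i) - g(x_i)|\,\Delta x_i < \epsilon,
\]
since $|\xi_i - x_i| \le \Delta x_i < \delta$. As $g$ is Riemann integrable, shrinking $\delta$ further if necessary ensures $|R_n - \int_a^b g\,dx| < \epsilon$ as well, so $|L_n - \int_a^b g\,dx| < 2\epsilon$ for every partition of mesh less than $\delta$. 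This is precisely the net-convergence required by the definition of rectifiability, and it proves that $L = \int_a^b \sqrt{1+f'(x)^2}\,dx$. The final claim about the primitive $G$ is then immediate from the Fundamental Theorem of Calculus applied to $g$.
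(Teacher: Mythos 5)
Your proof is correct, and its skeleton --- apply the Mean Value Theorem on each subinterval to rewrite $L_n$ as the tagged sum $\sum_i \sqrt{1+f'(\xi_i)^2}\,\Delta x_i$ and then pass to the integral --- is exactly the ``typical proof'' that the paper sketches for this theorem. The difference is that you actually close the step the paper singles out as the missing one: the convergence of these particular tagged sums, in the net sense over all partitions of small mesh, to $\int_a^b\sqrt{1+f'(x)^2}\,dx$. You do this via uniform continuity of $g=\sqrt{1+(f')^2}$ on the compact interval, which is the right tool; note only that your appeal to ``$g$ is Riemann integrable, so $|R_n-\int_a^b g\,dx|<\epsilon$ for small mesh'' is itself the nontrivial Riemann-sum convergence fact that the paper complains is omitted from calculus texts --- for continuous $g$ it follows from the same uniform-continuity estimate by squeezing $R_n$ between the upper and lower Darboux sums, so nothing circular is happening, but you should say so explicitly. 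By contrast, the paper's rigorously proved result (Theorem \ref{thmarclength}) deliberately avoids this machinery: it restricts to equally spaced subdivisions, assumes in addition that $f''$ exists and is bounded, and uses Taylor's formula with Lagrange remainder together with Lemma \ref{lemmaABCD} to obtain the explicit error bound $|L-L_n|\le M(b-a)^2/(2n)$. Your argument buys the full strength of Theorem \ref{classicalarclength} (only $f'$ continuous, arbitrary partitions) at the price of compactness arguments and the theory of Riemann sums; the paper's approach buys a short, self-contained proof with an explicit convergence rate at the price of stronger hypotheses.
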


The typical ''proof'' of this result runs as follows.
For the partition $\{ a=x_0 < x_1 < x_2 < \cdots < x_n =b \}$,
let $P_i$ be the point $(x_i , f(x_i))$, $0 \leq i \leq n$.
By the mean-value theorem there exists $c_i \in (x_{i-1},x_i)$ such that
$f(x_i) - f(x_{i-1}) = f'(c_i)(x_i - x_{i-1})$.
A few lines of calculation now yield that 
$L_n = \sum_{i=1}^n \sqrt{1 + f'(c_i)^2} \Delta x_i$
which can be recognised as a Riemann sum for 
$\int_a^b \sqrt{1 + f'(x)^2} \ dx$ which ends the proof by invoking
the fundamental theorem of calculus (FTC).

The problem with this ''proof'' is that it is, in fact, not a proof at all. 
Why? Well, because it relies on the FTC which is not proved in full detail
in any of the popular calculus texts in use today.
Sure, parts of it is proved, but the hardest part concerning the 
convergence of Riemann sums is left out.
The reason for skipping this is that a presentation including all details will be long
and complicated. For instance, in Tao's book \cite{tao2006}
the definition of general Riemann sums and proofs of properties these, including
the FTC, takes more than 30 pages,
excluding an argument for the crucial fact that continuous functions on compact 
intervals are uniformly continuous, which would make the presentation even longer.

We sympathise with the method of ''cheating'' with the theory in calculus courses.
To be honest, we can, of course, not prove every statement made in the course.
However, we feel that leaving out a valid argument concerning such a central fact as the convergence of
Riemann sums should be regarded as cheating at the wrong place.

In a recent article \cite{nystedt2019}, we argue that the integral therefore should
be defined using equally spaced subdivisions of the interval using only left (or right endpoints).
We call the corresponding sums {\it Euler sums}, inspired by the fact that 
Euler \cite[Part I, Section I, Chapter 7]{euler1768} proposed such sums for the approximative calculations of integrals.
In loc. cit., we show, using an idea of Poisson (see \cite{bressoud2011} or \cite{grabiner1983}), 
utilizing Taylor's formula with Lagrange remainder, that the following version of the FTC
easily can be proved in just a few lines of calculation.

\begin{theorem}\label{thmftc}
If $F$ is a real-valued function defined on $[a,b]$ such that its first derivative
exists and is continuous on $[a,b]$, and its second derivative exists and is bounded on $(a,b)$,
then $f=F'$ is integrable on $[a,b]$ and $\int_a^b f(x) dx = F(b)-F(a).$ 
\end{theorem}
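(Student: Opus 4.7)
The plan is to follow the Poisson-style argument alluded to in the excerpt, which turns Taylor's formula with Lagrange remainder into a direct estimate for the difference between $F(b)-F(a)$ and the Euler sum of $f=F'$. Throughout, ``integrable'' should be read in the sense of \cite{nystedt2019}, namely that the equally spaced Euler sums converge to a common limit as $n\to\infty$.

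First I would fix $n \geq 1$, set $h = (b-a)/n$, and take the uniform partition $x_i = a + ih$ for $i=0,1,\ldots,n$. Because $F'$ is continuous on $[a,b]$ and $F''$ exists and is bounded on $(a,b)$, Taylor's formula with Lagrange remainder applies on each subinterval $[x_i,x_{i+1}]$ (the remainder only requires $F''$ to exist on the open subinterval, which lies inside $(a,b)$): there exist $\xi_i \in (x_i,x_{i+1})$ with
\begin{equation*}
F(x_{i+1}) = F(x_i) + F'(x_i)\,h + \tfrac{1}{2} F''(\xi_i)\, h^2.
\end{equation*}
Rearranging and summing the telescoping left-hand side from $i=0$ to $n-1$ gives
\begin{equation*}
F(b) - F(a) = h \sum_{i=0}^{n-1} f(x_i) + \tfrac{h^2}{2} \sum_{i=0}^{n-1} F''(\xi_i).
\end{equation*}
The first term on the right is precisely the left-endpoint Euler sum $S_n(f)$.

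Next I would bound the remainder. Let $M$ be a bound for $|F''|$ on $(a,b)$. Since each $\xi_i$ lies in $(a,b)$,
\begin{equation*}
\left| \tfrac{h^2}{2} \sum_{i=0}^{n-1} F''(\xi_i) \right| \leq \tfrac{h^2}{2}\cdot n M = \tfrac{(b-a)M}{2}\, h,
\end{equation*}
which tends to $0$ as $n\to\infty$. Consequently $S_n(f) \to F(b)-F(a)$. The same computation, based on Taylor expansion around $x_{i+1}$ instead, gives the analogous statement for right-endpoint Euler sums, so $f$ is integrable in the Euler-sum sense of \cite{nystedt2019} and its integral equals $F(b)-F(a)$.

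I do not anticipate a genuine obstacle: the whole argument is essentially the telescoping identity plus one Lagrange remainder bound, and the hypothesis that $F''$ is bounded is exactly what makes the error term $O(h)$. The only point that requires mild care is the boundary behaviour of $F''$, but since the Lagrange points $\xi_i$ are strictly interior to each subinterval, the bound $|F''| \leq M$ on the open interval $(a,b)$ is all that is needed; no assumption on $F''$ at the endpoints is used.
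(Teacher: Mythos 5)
Your proposal is correct and is exactly the Poisson-style argument the paper has in mind: the paper defers the proof of this theorem to \cite{nystedt2019}, but the method it describes there (Taylor's formula with Lagrange remainder on each subinterval of a uniform partition, telescoping, and an $O(h)$ bound on the accumulated remainder via the bound on $F''$) is precisely what you carry out, and it mirrors the paper's own proof of Theorem \ref{thmarclength} step for step.
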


\section{Simplified arc length}

In this article, we parallel our investigations in \cite{nystedt2019} and use Euler-like 
sums to define length of function graphs (see Definition \ref{defrectifiable}). 
We prove (see Theorem \ref{thmarclength}), using our version of the FTC, assuming some regularity conditions,
that length of function graphs can be calculated via integrals using the classical formula 
given in Theorem \ref{classicalarclength}.

\begin{definition}\label{defrectifiable}
Suppose that $f$ is a real-valued function defined on an interval $[a,b]$.
For all $n \in \mathbb{N}$ we put $\Delta x = (b-a)/n$, and for all
$k \in \{ 0,1,\ldots,n-1 \}$, we put
$x_k = a + k \Delta x$ and $\Delta y_k = f(x_{k+1})-f(x_k)$.
We say that $L_n = \sum_{k=0}^{n-1} \sqrt{ (\Delta x)^2 +  ( \Delta y_k )^2 }$
is the $n^{\rm th}$ {\it polygonal length} of $f$ on $[a,b]$
and we say that $f$ is {\it rectifiable} on $[a,b]$
if the limit $L = \lim_{n \to \infty} L_n$ exists.
In that case, we call $L$ the {\it arc length} of $f$ on $[a,b]$. 
\end{definition}

The above definition is mathematically crystal clear and
the poly\-gonal lengths of this form are easy for students to calculate in particular cases
(see Section \ref{discussion}).
To prove the main result of the article, we need Taylor's formula with Lagrange remainder,
a result which we now state, for the convenience of the reader.

\begin{theorem}\label{thmtaylor}
Let $n$ be a non-negative integer. 
If $f$ is a real-valued function defined on $[a,b]$ such that its $n^{th}$ derivative exists,
is continuous on $[a,b]$, and is differentiable on $(a,b)$,
then there exists $c \in (a,b)$ such that
$$f(b) = \sum_{j=0}^n \frac{ f^{(j)}(a) }{ j! } (b-a)^i + \frac{ f^{(n+1)}(c) }{ (n+1)! } (b-a)^{n+1}.$$
\end{theorem}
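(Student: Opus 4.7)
The plan is to reduce the statement to a single application of Rolle's theorem by means of a cleverly chosen auxiliary function. I would first define a constant $K$ uniquely by the requirement
$$f(b) = \sum_{j=0}^n \frac{f^{(j)}(a)}{j!}(b-a)^j + K(b-a)^{n+1},$$
and then aim to show that $K = f^{(n+1)}(c)/(n+1)!$ for some $c \in (a,b)$. That will immediately yield the claimed identity.

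To this end I would introduce the auxiliary function
$$g(x) = f(b) - \sum_{j=0}^n \frac{f^{(j)}(x)}{j!}(b-x)^j - K(b-x)^{n+1}, \qquad x \in [a,b].$$
By the very definition of $K$ one has $g(a) = 0$, and directly from the formula $g(b) = f(b) - f(b) - 0 = 0$. The regularity hypotheses on $f$ ensure that $g$ is continuous on $[a,b]$ and differentiable on $(a,b)$, so Rolle's theorem provides a point $c \in (a,b)$ with $g'(c) = 0$.

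The heart of the argument is then the observation that $g'$ telescopes. Applying the product rule to each summand $\frac{f^{(j)}(x)}{j!}(b-x)^j$ produces two contributions (the $j=0$ summand contributing only one, since $(b-x)^0$ is constant), and after a shift of the summation index the contributions pair up and cancel, leaving only the extremal term
$$g'(x) = -\frac{f^{(n+1)}(x)}{n!}(b-x)^n + (n+1)K(b-x)^n.$$
Evaluating at $x = c$ and dividing by $(b-c)^n \neq 0$ gives $K = f^{(n+1)}(c)/(n+1)!$, which combined with the defining equation for $K$ is exactly the statement of the theorem.

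The main obstacle I expect is purely bookkeeping in the telescoping calculation: one must track the $j = 0$ boundary of the first sum and the $j = n$ boundary of the shifted sum carefully to verify that exactly the term involving $f^{(n+1)}$ fails to cancel, with all other terms disappearing. Once this is handled cleanly, the proof uses no ingredient beyond Rolle's theorem and the product rule, which fits with the spirit of the paper of keeping the analytical prerequisites minimal.
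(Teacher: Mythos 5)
Your proposal is a correct and complete proof: the auxiliary function $g$ vanishes at both endpoints, the hypotheses on $f^{(n)}$ give exactly the continuity on $[a,b]$ and differentiability on $(a,b)$ needed for Rolle's theorem, and the telescoping computation of $g'$ leaves precisely $-\frac{f^{(n+1)}(x)}{n!}(b-x)^n + (n+1)K(b-x)^n$, so dividing by $(b-c)^n\neq 0$ yields $K=f^{(n+1)}(c)/(n+1)!$ as claimed. The paper does not prove this theorem itself but only points to the literature, and your argument is exactly the classical Rolle's-theorem proof found there, so there is nothing substantive to compare; note only that the exponent $i$ in the paper's displayed sum is a typo for $j$, which your restatement silently corrects.
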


\begin{proof}
For a short proof, see e.g. \cite{hardy1908,nystedt2019,olmstead1961}.
\end{proof}

In the proof of our main result, we also need the following lemma.

\begin{lemma}\label{lemmaABCD}
If $A$, $B$ and $C$ are real numbers, with $A>0$,
then there is a real number $D$, between $0$ and $C$, such that 
$$\sqrt{ A + (B + C)^2 } = \sqrt{ A + B^2 } + \frac{ (B+D)C }{ \sqrt{A + (B+D)^2} }.$$
\end{lemma}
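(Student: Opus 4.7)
The identity has the shape $h(C) - h(0) = h'(D) \cdot C$, where $h$ is obtained by reading off the quantity under the first square root on the left-hand side as a function of $C$. This strongly suggests applying the Mean Value Theorem, i.e.\ Theorem \ref{thmtaylor} in the case $n = 0$.

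More precisely, the plan is to introduce the auxiliary function $g$ defined by $g(t) = \sqrt{A + (B+t)^2}$ for $t \in \mathbb{R}$. Since $A > 0$, the expression under the radical is bounded below by $A$, so $g$ is everywhere differentiable with
$$g'(t) = \frac{B+t}{\sqrt{A + (B+t)^2}}.$$
I would then apply Theorem \ref{thmtaylor} with $n = 0$ to $g$ on the closed interval with endpoints $0$ and $C$ (the hypotheses being trivially satisfied since $g$ is $C^\infty$). This yields a point $D$ strictly between $0$ and $C$ such that $g(C) - g(0) = g'(D)\,C$, which is precisely the claimed identity after moving $g(0) = \sqrt{A + B^2}$ to the right-hand side.

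The only minor bookkeeping concerns the degenerate case $C = 0$: here the interval collapses and Theorem \ref{thmtaylor} does not apply, but both sides of the identity coincide for any choice of $D$, e.g.\ $D = 0$, which is (weakly) between $0$ and $C$. For $C \neq 0$, whether $C > 0$ or $C < 0$, the argument above goes through unchanged since the theorem is symmetric in the roles of the endpoints.

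I do not expect any real obstacle; the content of the lemma is essentially a rephrasing of the MVT for a specific function. The only thing to get right is the choice of the auxiliary function $g$ so that $g(C)$, $g(0)$, and $g'(D)$ line up with the three terms in the statement, and the verification that the formula $g'(t) = (B+t)/\sqrt{A + (B+t)^2}$ is valid throughout because $A > 0$ prevents the radicand from vanishing.
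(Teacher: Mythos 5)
Your proposal is correct and follows essentially the same route as the paper: define $g(x) = \sqrt{A + (B+x)^2}$, note that $A>0$ guarantees differentiability with $g'(x) = (B+x)/\sqrt{A+(B+x)^2}$, and apply Theorem \ref{thmtaylor} with $n=0$ on the interval from $0$ to $C$. Your extra remark about the degenerate case $C=0$ is a small point of care the paper omits, but it does not change the argument.
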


\begin{proof}
Define the function $g : \mathbb{R} \to \mathbb{R}$ by $g(x) = \sqrt{ A + ( B + x )^2 }$, for $x \in \mathbb{R}$.
Since $A > 0$, the function $g$ is differentiable at all $x \in \mathbb{R}$ with derivative
$g'(x) = \frac{ (B + x) }{ \sqrt{A + (B + x)^2} }$.
The claim now follows from Theorem \ref{thmtaylor} with $n=0$, $a = 0$ and $b = C$
(that is, the mean value theorem). 
\end{proof}

\begin{theorem}\label{thmarclength}
If $f$ is a real-valued function defined on $[a,b]$ such that its first derivative
exists and is continuous on $[a,b]$, its second derivative exists and 
is bounded on $(a,b)$, then $f$ is rectifiable on $[a,b]$ if and only
if the function $\sqrt{ 1 + (f')^2 }$ is integrable on $[a,b]$.
In that case, the length $L$ of $f$ on $[a,b]$ equals
$\int_a^b \sqrt{ 1 + f'(x)^2 } \ dx$.
If, in addition, $\sqrt{1 + (f')^2}$ has an antiderivative $G$ on $[a,b]$,
then $L = G(b)-G(a)$. 
\end{theorem}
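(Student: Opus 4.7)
The plan is to replace each segment length $\sqrt{(\Delta x)^2+(\Delta y_k)^2}$ appearing in $L_n$ by an approximation whose sum is precisely a left-endpoint Euler sum for $\sqrt{1+(f')^2}$, and then show that the cumulative error tends to zero as $n\to\infty$. If this succeeds, then $L_n$ converges if and only if the Euler sums for $\sqrt{1+(f')^2}$ do, with a common limit, which is exactly the equivalence and the integral formula asserted in the theorem.

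First I would apply Theorem \ref{thmtaylor} with $n=1$ on each subinterval $[x_k,x_{k+1}]$ to obtain $c_k\in(x_k,x_{k+1})$ with $\Delta y_k = f'(x_k)\Delta x + \tfrac{1}{2}f''(c_k)(\Delta x)^2$. Factoring $(\Delta x)^2$ out of $(\Delta x)^2+(\Delta y_k)^2$ puts the segment length in exactly the form treated by Lemma \ref{lemmaABCD}, with $A=1$, $B=f'(x_k)$ and $C=\tfrac{1}{2}f''(c_k)\Delta x$. The lemma then splits the segment length as $\sqrt{1+f'(x_k)^2}\,\Delta x$ plus a remainder term that carries an extra factor of $\Delta x$ and depends on some unknown $D_k$ lying between $0$ and $C$.

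Summing over $k$ yields a decomposition $L_n = S_n + R_n$, where $S_n = \sum_{k=0}^{n-1}\sqrt{1+f'(x_k)^2}\,\Delta x$ is an Euler sum for $\sqrt{1+(f')^2}$. To control $R_n$ I would invoke the elementary inequality $|t/\sqrt{1+t^2}|\le 1$, which bounds the $D_k$-dependent factor regardless of the unknown $D_k$, together with a uniform bound $M$ on $|f''|$, yielding $|R_n|\le \tfrac{M}{2}n(\Delta x)^2 = \tfrac{M(b-a)}{2}\Delta x \to 0$. Hence $(L_n)$ and $(S_n)$ converge or diverge together, and their limits agree when they exist, giving the equivalence of rectifiability and integrability of $\sqrt{1+(f')^2}$ and the identity $L=\int_a^b\sqrt{1+f'(x)^2}\,dx$.

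For the closing statement, I would verify that any antiderivative $G$ of $\sqrt{1+(f')^2}$ itself satisfies the hypotheses of Theorem \ref{thmftc}: since $f'$ is continuous, $G'=\sqrt{1+(f')^2}$ is continuous on $[a,b]$; since $f''$ exists and is bounded by $M$ on $(a,b)$, the chain rule gives $G''=f'f''/\sqrt{1+(f')^2}$ on $(a,b)$, and the inequality $|f'|/\sqrt{1+(f')^2}\le 1$ yields $|G''|\le M$. Theorem \ref{thmftc} then delivers $\int_a^b\sqrt{1+f'(x)^2}\,dx=G(b)-G(a)$, which combines with the integral formula to give $L=G(b)-G(a)$. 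The main obstacle I anticipate is bookkeeping rather than insight: one must track the two distinct mean-value points (the $c_k$ from Taylor and the $D_k$ from the lemma) carefully, but the proof succeeds precisely because neither of them has to be evaluated, as the universal inequality $|t/\sqrt{1+t^2}|\le 1$ absorbs both unknowns uniformly in $k$.
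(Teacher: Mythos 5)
Your proposal is correct and follows essentially the same route as the paper's own proof: Taylor's formula with $n=1$ on each subinterval, Lemma \ref{lemmaABCD} with $A=1$, $B=f'(x_k)$, $C=f''(c_k)\Delta x/2$, the decomposition $L_n=S_n+R_n$ with $|R_n|\leq M(b-a)^2/(2n)$ via the bound $|t|/\sqrt{1+t^2}\leq 1$, and Theorem \ref{thmftc} for the final claim. Your explicit verification that $G$ satisfies the hypotheses of Theorem \ref{thmftc} (computing $G''=f'f''/\sqrt{1+(f')^2}$ and bounding it by $M$) is a detail the paper leaves implicit, but it is the same argument.
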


\begin{proof}
We use the notation introduced earlier.
From Theorem \ref{thmtaylor} with $n=1$, we get that
$$\Delta y_k / \Delta x = f'(x_k) + f''(c) \Delta x/2$$
for some $c \in (x_k,x_{k+1})$, depending on $k$ and $\Delta x$, for $k \in \{ 0,\ldots,n-1 \}$.
Thus, from Lemma \ref{lemmaABCD}, it follows that
\begin{eqnarray*}
\sqrt{ 1 + (\Delta y_k / \Delta x)^2 } &=& 
\sqrt{ 1 + ( f'(x_k) + f''(c) \Delta x / 2 )^2 } \\
&=& \sqrt{ 1 + f'(x_k)^2 } + \frac{(f'(x_k)^2 + D) f''(c) \Delta x /2}{ \sqrt{ 1 + ( f'(x_k)+D )^2 } } \\
\end{eqnarray*}
for some real number $D$ between $0$ and $f''(c) \Delta x / 2$. Hence
$$
L_n = \sum_{k=0}^{n-1} \sqrt{ (\Delta x)^2 + (\Delta y_k)^2 } = \sum_{k=0}^{n-1} \sqrt{ 1 + (\Delta y_k / \Delta x)^2 } \Delta x $$
$$ = \sum_{k=0}^{n-1} \sqrt{ 1 + f'(x_k)^2 } \Delta x + 
\sum_{k=0}^{n-1} \frac{(f'(x_k)+D) f''(c) (\Delta x)^2 /2}{ \sqrt{ 1 + ( f'(x_k)+D )^2 } } $$
which proves the claim, since
$$\left| \sum_{k=0}^{n-1} \frac{(f'(x_k) + D) f''(c) (\Delta x)^2 /2}{ \sqrt{ 1 + ( f'(x_k)+D )^2 } } \right|
\leq \frac{(\Delta x)^2}{2} \sum_{k=0}^{n-1} | f''(c) | \leq \frac{M (b-a)^2}{2n} \to 0,$$
as $n \to \infty$,
for any $M$ satisfying $|f''(x)| \leq M$ when $a < x < b$.
The last part follows from Theorem \ref{thmftc}.
\end{proof}

\begin{remark}
From the above proof, we immediately get the error bound 
$$|L - L_n| \leq \frac{M(b-a)^2}{2n},$$
for all $n \in \mathbb{N}$,
where $M = {\rm sup} \{ \ |f''(x)| \ ; \ a < x < b \ \}$,
for the $n^{\rm th}$ polygonal length.
\end{remark}

\section{Primitives of $\sqrt{1+(f')^2}$}\label{antiderivatives}

It seems to be a common opinion among mathematics teachers that there are few examples of 
functions $f$ for which $\sqrt{1 + (f')^2}$ has a primi\-tive function.
In this section, we show that this is far from true by recalling two large classes of such functions.

\subsection{The examples of Neil, van Heuraet and Fermat}

All of the persons mentioned above considered rectification of curves of the type
$f(x)^n = a x^{n+1}$, for positive integers $n$ and positive real numbers $a$. 
Here, we will not follow their original approaches, but instead use modern tools
from a typical calculus class to investigate this problem. 
First of all, by taking $n^{\rm th}$ roots
we can always rewrite the equation as $f(x) = b x^{1+1/n}$ for a positive real number $b$
(we assume that $x$ and $y$ are non-negative).
Therefore, $\sqrt{1 + f'(x)^2} = \sqrt{1 + c x^{2/n}}$
for some positive real number $c$.
Next, we make the substitution $s = \sqrt{c} x^{1/n}$ so that
$$\sqrt{1 + c x^{2/n}} \ dx = e s^{n-1} \sqrt{1 + s^2} \ ds$$
for some positive real number $e$.
It is well known that it is always possible to find a primitive 
function to an expression which is rational in $s$ and $\sqrt{1+s^2}$
by making the substitution $t = s + \sqrt{1+s^2}$.
Indeed, from the equality $(t-s)^2 = 1 + s^2$ we get that $s = (t^2-1)/2t$ and thus 
$$\sqrt{1+s^2} = t - s = t - (t^2-1)/2t = (t^2+1)/2t.$$
From the equality $s = (t^2-1)/2t$ we get that 
$$ds/dt = (2t \cdot 2t - (t^2-1)2)/(2t)^2 = (t^2+1)/2t^2.$$
Therefore
$$\int s^{n-1} \sqrt{1 + s^2} \ ds = 
\int \frac{(t^2-1)^{n-1}}{(2t)^{n-1}} \cdot
\frac{ t^2+1 }{ 2t } \cdot \frac{t^2+1}{ 2t^2} \ dt$$
$$= 2^{-n-1} \int (t^2-1)^{n-1}(t^4 + 2t^2 + 1) t^{-n-2} \ dt.$$
If we expand the product in the last integral we can write the 
integrand as a sum of powers of $t$ which, of course, is easily integrated.
To illustrate the above procedure, we will carry out this analysis,
in complete detail, in a few cases.

\subsubsection*{The case when $n=1$ and $a=1/2$}
This is the problem of the rectification of the parabola $f(x) = x^2/2$.
In this case $c=1$ and $x=s$ and 
the integral that we seek therefore equals
$$\int \sqrt{1 + x^2} \ dx =  2^{-2} \int (t^4 + 2t^2 + 1) t^{-3} \ dt$$
$$= \frac{1}{4} \int t + 2t^{-1} + t^{-3} \ dt = 
t^2/8 + {\rm log}(t)/2 - t^{-2}/8 + C.$$
To simplify this result, we note that
$$t^2 = 2x^2 + 1 + 2x \sqrt{1+x^2}$$
and
$$(x + \sqrt{x^2+1})(x - \sqrt{x^2+1}) = -1$$
so that
$$t^{-1} = \sqrt{x^2+1} - x$$
which in turn implies that 
$$t^{-2} = 2x^2 + 1 - 2x \sqrt{1+x^2}.$$
All of this finally implies that
$$\int \sqrt{1 + x^2} \ dx = x\sqrt{1+x^2}/2 + {\rm log}( x + \sqrt{1+x^2} )/2 + C.$$

\subsubsection*{The case when $n=2$ and $a=2/3$}
This is the problem of the rectification of the semicubical parabola $f(x)^2 = 4x^3/9$.
In this case we get $f(x) = 2x^{3/2}/3$ so that 
$\sqrt{1 + f'(x)^2} = \sqrt{1 + x}$. Here we could, in theory,
follow the general procedure suggested previously.
However, that would lead to an unnecessarily long calculation
since we immediately see that the sought after integral equals
$$\int \sqrt{1+x} \ dx = 2(1 + x)^{3/2}/3 + C.$$

\subsubsection*{The case when $n=3$ and $a = 3/4$}
This is the problem of the rectification of the curve $f(x) = 3x^{4/3}/4$.
In this case $c=1$ and $x^{1/3} = s$ so that $e=3$ and 
the integral that we seek therefore equals
$$\int \sqrt{1 + x^{2/3}} \ dx = 3 \cdot 2^{-4} \int (t^2-1)^2 (t^4 + 2t^2 + 1) t^{-5} \ dt$$
$$= \frac{3}{16} \int t^3 - 2 t^{-1} + t^{-5} \ dt = 3 t^4/64 - 3 {\rm log}(t)/8 - 3 t^{-4}/64 + C.$$
From the first example, we get that
$$t^4 = 8s^4 + 8s^2 + 1 + 4s(2s^2+1)\sqrt{1+s^2}$$
and
$$t^{-4} = 8s^4 + 8s^2 + 1 - 4s(2s^2+1)\sqrt{1+s^2}$$
so that 
$$\int \sqrt{1 + x^{2/3}} \ dx = 3 s(2s^2+1)\sqrt{1+s^2}/8 - 3 {\rm log}( s + \sqrt{1+s^2} )/8 + C$$
$$= 3 x^{1/3} (2 x^{2/3}+1)\sqrt{1+x^{2/3}}/8 - 3 {\rm log}( x^{1/3} + \sqrt{1 + x^{2/3}} )/8 + C  $$

\subsection{Pythagorean triples}

Suppose that we seek two functions $p$ and $q$ such that $f' = p/q$
and $1 + (f')^2 = g^2$ where $g$ is some function to which we can find a primitive function $G$. 
This implies that $1 + p^2/q^2 = g^2$ or equivalently that $(p^2 + q^2)/q^2 = g^2$.
One way to accomplish this is if $p^2 + q^2 = r^2$ for some function $r$ of reasonably simple type. 
This means that $(p,q,r)$ is a Pythagorean triple of functions.
It is a classical result in number theory that such triples, consisting of integers,
can be parametrized by $p = k(m^2-n^2)$, $q = k(2mn)$ and $r = k(m^2+n^2)$, where
$k$, $m$ and $n$ are positive integers with $m > n$, and with $m$ and $n$ 
coprime and not both odd (see e.g. \cite{long1972}).
In \cite{kubota1972} Kubota 
has shown that the same kind of result holds in any unique factorization 
domain (UFD). In particular, it holds for polynomial rings $\mathbb{R}[X]$,
since they are Euclidean domains and hence UFD's.
The bottom line is that we can use this kind of parametrization to yield examples
of rectifiable curves in the following way.
Choose any functions $m$ and $n$ and put $p = m^2-n^2$ and $q = 2mn$.
Take a function $f$ such that $f' = p/q = m/2n - n/2m$.
Then $\sqrt{1 + (f')^2} = 
\sqrt{1 + (m/2n - n/2m)^2} =
\sqrt{1 + (m/2n)^2 - 1/2 + (n/2m)^2 } =
\sqrt{ (m/2n + n/2m)^2 } = m/2n + n/2m$
so that 
$$G(x) = \int \sqrt{1 + f'(x)^2} \ dx = \int m/2n + n/2m \ dx.$$
Let us illustrate the above algorithm in three examples.

\begin{example}
A problem which often comes up in calculus textbooks is to calculate 
the length of a portion of the hyperbolic cosine function. Based on our calculations above,
it is easy too see why. Indeed, if we put $f(x) = \cosh(x)$, then
$f'(x) = \sinh(x) = m/(2n) - n/(2m)$
if we put $m=e^x$ and $n=1$. Therefore, we get that
$$G(x) = \int m/(2n) + n/(2m) \ dx = \int \cosh(x) \ dx = \sinh(x) + C.$$
The corresponding task for the students could therefore be:
\begin{problem}
Show that the length of $$f(x)=\cosh(x)$$ over the interval $[0,1]$ equals
$$e/2 - 1/(2e).$$
\end{problem}
\end{example}

\begin{example}
Take $m = 4x$ and $n = x^2+1$. Then we need to find $f$ 
so that $f'(x) = m/(2n) - n/(2m) = 4x/(2x^2+2) - x/8 - 1/(8x)$.
We choose $f(x) = {\rm log}( 2x^2 + 2 ) - x^2/16 - {\rm log}(x)/8$.
Then, from the above, we get that  
$$G(x) = \int \sqrt{1 + f'(x)^2} \ dx = \int m/2n + n/2m \ dx$$
$$= \int 4x/(2x^2+2)  + x/8 + 1/(8x) \ dx = 
{\rm log}(2x^2+2) + x^2/16 + {\rm log}(x)/8 + C.$$
Now we can construct a challenging task for the students:
\begin{problem}
Show that the length of $$f(x)= {\rm log}( 2x^2 + 2 ) - x^2/16 - {\rm log}(x)/8$$ 
over the interval $[1,2]$ equals $$3/16 + {\rm log}(5) - 7 {\rm log}(2)/8.$$
\end{problem} 
\end{example}

\begin{example}
Take $m = (x+2)^2$ and $n = (x+1)(x^2+1)$. 
Then we need to find $f$ so that 
$$f'(x) =  m/(2n) - n/(2m) = \frac{ (x+2)^2 }{ 2(x+1)(x^2+1) } - \frac{ (x+1)(x^2+1) }{ 2(x+2)^2 }.$$ 
Since
$$\frac{ (x+2)^2 }{ 2(x+1)(x^2+1) } = \frac{ x }{ 4(x^2+1) } + \frac{ 7 }{ 4(x^2+1) }$$ 
and
$$\frac{(x+1)(x^2+1)}{2(x+2)^2} = x/2 - 3/2 - \frac{5}{2(x+2)^2} + \frac{9}{2(x+2)}$$
we can choose
$$f(x) = \frac{{\rm log}(x^2+1)}{8} + \frac{7{\rm tan}^{-1}(x)}{4} - \frac{x^2}{4}
+ \frac{3x}{2} - \frac{5}{2(x+2)} - \frac{9 {\rm log}(x+2)}{2} + C.$$
Now we can construct a really challenging task for the students:
\begin{problem}
Show that the length of 
$$f(x) = \frac{{\rm log}(x^2+1)}{8} + \frac{7{\rm tan}^{-1}(x)}{4} - \frac{x^2}{4}
+ \frac{3x}{2} - \frac{5}{2(x+2)} - \frac{9 {\rm log}(x+2)}{2}$$
over the interval $[0,1]$ equals
$$\frac{7 \pi}{16} - \frac{5}{3} + \frac{9{\rm log}(3)}{2} - \frac{33{\rm log}(2)}{8}.$$
\end{problem}
\end{example}

\section{Discussion}\label{discussion}

In this article, we have presented a simplified definition of arc length 
as a limit of polygonal sums where the subdivision of the interval is uniform.
We feel that such an approach would support the students' learning of calculus
for many reasons. 

First of all, we have provided a complete proof
that the polygonal lengths converge precisely when the associated integral 
$$\int_a^b \sqrt{1 + f'(x)^2} \ dx$$ exists. In many popular calculus books
the proof of this fact is incomplete since convergence of the nets associated 
to general Riemann sums is not proved.

Secondly and perhaps more importantly, the students can, using a simple computer program,
easily calculate approximations of our simplified  polygonal lengths,
before using the formula $$L = \int_a^b \sqrt{1 + f'(x)^2} \ dx.$$
For instance, suppose the students are given the task
of calculating the arc length of  $f(x) = 2 x^{3/2} / 3$ over the interval $[3,8]$. 
For $n \in \mathbb{N}$ we have that $\Delta x = 5/n$ and thus 
$$L_n = \sum_{k=0}^{n-1} \sqrt{ \frac{25}{n^2} + \left( \frac{2}{3} \left( 3 + \frac{5k+5}{n} \right)^{3/2} - 
\frac{2}{3} \left( 3 + \frac{5k}{n} \right)^{3/2} \right)^2 }.$$
Using a computer program, rounding off to four decimal places, we get 
$$L_1 \approx 12.6508 \quad L_2 \approx 12.6622 \quad L_3 \approx 12.6646 \quad L_4 \approx 12.6655$$
$$L_5 \approx 12.6659 \quad L_{10} \approx 12.6665 \quad L_{20} \approx 12.6666 \quad L_{100} \approx 12.6666$$
which strongly suggests that $L = 38/3$.
After this the students can try to make the exact calculation,
which, as we saw before, is the rectification of the semicubical parabola.
Namely, since $f'(x)^2 = x$, we get, 
using theorem \ref{classicalarclength}, that
$$L = \int_3^8 \sqrt{ 1 + x } \ dx = \left[ \frac{2(1+x)^{3/2}}{3} \right]_3^8 = 
\frac{2 \cdot 9^{3/2}}{3} - \frac{2 \cdot 4^{3/2}}{3} = \frac{38}{3}$$
which confirms what the students guessed.
The students could then move on to try to calculate the length
of the parabola $f(x) = x^2/2$ over the interval $[0,1]$.
Again, making approximative calculations, we have $\Delta x = 1/n$ and thus
$$L_n = \sum_{k=0}^{n-1} \sqrt{ \frac{1}{n^2} + \frac{1}{4} \left( \left( \frac{k+1}{n} \right)^2 - 
\left( \frac{k}{n} \right)^2 \right)^2 }.$$
Using a computer program, rounding off to four decimal places, we get 
$$L_1 \approx 1.1180 \quad L_2 \approx 1.1404 \quad L_3 \approx 1.1445 \quad 
L_4 \approx 1.1459 \quad L_5 \approx 1.1466$$
$$L_{10} \approx 1.1475 \quad L_{20} \approx 1.1477 \quad L_{100} \approx 1.1478 \quad
L_{200} \approx 1.1478.$$
After this, the students could try to calculate the exact value of the integral.
From the discussion in the previous section this is the length of the parabola which equals
$$\int_0^1 \sqrt{1 + x^2} \ dx = \sqrt{2}/2 + {\rm log}(1 + \sqrt{2})/2.$$
Finally, the students could try to calculate the length of $f(x) = x^3/3$ over the interval $[0,1]$.
Numerically, they would easily get $L_{100} = 1.0894$, rounding off to four decimal places.
However, when considering the exact length calculation, they have to deal with the integral
$$\int_0^1 \sqrt{1 + x^4} \ dx$$ which involves elliptic integrals (see e.g. \cite{hancock1958})
and is impossible to calculate exactly using the elementary functions.
It is our firm belief that students should be subjected to the calculation
of such integrals in a typical calculus course, in order for them to appreciate
the numerical calculations, which, after all, are crucially important for them
in a future work-life as e.g. engineers.


\vfill\eject

\end{document}